\documentclass[pdftex,10pt,a4paper,twoside,twocolumn,fleqn]{article}
\pdfoutput=1
\usepackage{graphicx}
\usepackage{mathpazo}   % i. B. für Bildschirmansicht
\usepackage[T1]{fontenc}
\usepackage{textcomp}
\usepackage{amssymb}
\usepackage{amsmath}
\usepackage{natbib}
\usepackage{subfigure}
\sloppy
\newcommand{\R}{\ensuremath{\mathbb{R}}}
\newcommand{\norm}[1]{\left|\left|#1\right|\right|}

\newcommand{\conv}[1]{\operatorname{conv}{#1}\,}
\usepackage[pdftex,
    bookmarks,
    bookmarksopen=true,
    bookmarksnumbered=true,
    pdfauthor={Daniel P. Mohr, Ina Stein,  Thomas Matzies, Christina A. Knapek},
    pdftitle={Robust Topology Optimization of Truss with regard to Volume},
    pdfkeywords={robust optimization, uncertainty, truss topology optimization, sizing optimization, ground structure, plastic design},
    colorlinks, linkcolor=blue, urlcolor=blue
]{hyperref}
\pdfcompresslevel=9
\usepackage{anysize}
\marginsize{2cm}{2cm}{2cm}{2cm}
\usepackage{fancyhdr}
\pagestyle{fancy}
%E  	Even page
%O 	Odd page
%L 	Left
%C 	Center
%R 	Right
%H 	Header
%F 	Footer
\fancyhf{}
%\fancyhead[OR]{\rightmark}
%\fancyhead[EL]{\leftmark}
%\fancyhead[c]{D. P. Mohr, I. Stein, Th. Matzies, C. A. Knapek: Robust Topology Optimization of Truss with regard to Volume}
\fancyhead[ER]{D. P. Mohr, I. Stein, Th. Matzies, C. A. Knapek}
\fancyhead[OL]{Robust Topology Optimization of Truss with regard to Volume}
\fancyfoot[EL,OR]{\thepage}

\title{Robust Topology Optimization of Truss with regard to Volume}
%\author{Daniel P. Mohr \and
\author{Daniel P. Mohr\thanks{E-mail: \href{email:daniel.mohr@ttrr.cadae.de}{daniel.mohr@ttrr.cadae.de}} \and
Ina Stein\thanks{Institute of Mathematics and Computer Applications, Department of Aerospace Engineering, Universit\"at der Bundeswehr M\"unchen, Werner-Heisenberg-Weg 39, D-85577 Neubiberg, Germany.} \and
Thomas Matzies\thanks{Institute of Lightweight Structures, Department of Aerospace Engineering, Universit\"at der Bundeswehr M\"unchen, Werner-Heisenberg-Weg 39, D-85577 Neubiberg, Germany} \and
Christina A. Knapek\thanks{Max-Planck-Institut f\"ur extraterrestrische Physik, Postfach 1312, Giessenbachstr., 85741 Garching, Germany}}
\date{\today}
\usepackage{footmisc}
\DefineFNsymbols*{lamportstar}[math]{{\star}{\star\star}{\star\star\star}{\star\star\star\star}}
\setfnsymbol{lamportstar}

\setlength{\mathindent}{2\parindent}
\usepackage[standard,thmmarks]{ntheorem}
% Theorems: Theorem, Lemma, Proposition, Corollary, Satz, Korollar,
% Definitions: Definition,
% Examples: Example, Beispiel,
% Remarks:    Anmerkung, Bemerkung, Remark,
% Proofs:     Proof and Beweis.
\begin{document}
\maketitle
\section*{Abstract}
A common problem in the optimization of structures is the handling of uncertainties in the parameters. If the parameters appear in the constraints, the uncertainties can lead to an infinite number of constraints. Usually the constraints have to be approximated by finite expressions to generate a computable problem. Here, using the example of the topology optimization of a truss, a method is proposed to deal with such uncertainties by using robust optimization techniques, leading to an approach without the necessity of any approximation. With adequately chosen load cases, the final expression is equivalent to the multiple load case. Simple numerical examples of typical problems illustrate the application of the method.
\section{Introduction}
Since the first work by \cite{michell}, a great number of methods for the optimization of truss geometries were developed. A good overview of methods is given in \cite{bendsoebentalzowe}. A possible way to deal with uncertainties in parameters during the optimization process is robust optimization, a detailed survey of which can be found in \cite{Beyer_Sendhoff_2007}. The idea behind the approach chosen in this paper for the robust topology optimization of a truss --- following the ground structure method \citep{dorngomorygreenberg} with the use of a matrix force method \citep{przemieniecki} --- is outlined in the following.

A general optimization problem $\min_x{z(x)}$ subject to $x \in S$ is often given for various different parameters, and principally represents the structure of the problem. For example, the sizing optimization of a truss determines the overall structure. The meaning of the structure arises from the specification of actual data, for instance the topology of the bars, the definition of the supports as well as the loads. In general, one has the optimization problems $\min_x{z(x,y)}$ subject to $x \in S(y)$ for different fixed parameters $y$ defined by the actual data.

If the parameters are not exactly known, one has to consider $\min_x{z(x,y)}$ subject to $x \in S(y)$ for $y \in Y$. Here, $Y$ represents the set of the uncertain parameters. Even without stating precisely the origin of this uncertainties (or scattering) in the parameters, the question arises on how to obtain a computable optimization problem.

The reasons for uncertainties in the parameters can be e.g. measurement errors if the parameters are actually obtained as or derived from measured variables. A parameter could also scatter across intervals which are restricted by limits provided by the user. Later it will be shown that it can make sense to choose an artificial limit for the parameters. 

The importance of uncertainties arises simply due to their occurrence as e.g. measurement errors in real live applications.
Measurement errors can basically be divided into two groups: systematic and random errors \citep{taylor}. Systematic errors are caused for instance by bad calibration of instruments or defective design. The distinctive feature of this kind of error is that it is always directed, and its impact is not necessarily known, which makes it difficult to treat in the error analysis (a miscalibration can cause an offset of a measured parameter, and this is not a scattering), and the best strategy is to identify and remove all sources for systematic errors.

Random errors originate from a multitude of sources. Common sources are the thermal noise superimposed on a measurement due to a finite measurement device temperature, statistical fluctuations in counting experiments, or limited resolution of instruments yielding upper and lower limits for a parameter, to name a few examples. Random errors can usually not be avoided, but if their nature is known, they can be treated in a defined manner.

If uncertainties follow a known statistical distribution, this knowledge can be integrated in procedures handling the parameters.
For example in the case of the calculation of a mean value from a large number of independent measurements, a Gaussian distribution is assumed to justify the average as the most probable value. Other well-known and common distributions are the Poisson distribution in counting experiments, or the equipartition of the dice roll.

If the distribution is completely unknown due to the lack of a justifiable model, boundaries of the interval in which the measured variable lies have to be established to generate a treatable problem. This is the equivalent to a user supplied limit for a parameter.

In the case of an either unknown distribution or if only the limits are known, one could choose an arbitrary nominal value $y_0 \in Y$ and solve the problem for it. This procedure can lead to results which are invalid for any other value drawn from $Y$. More reasonable is the consideration of all parameter values $y \in Y$ - matching the worst-case-approach. This immediately leads to the robust optimization \citep{ben_tal_nemirovski_robust_convex_optimization} constituting the main body of the work presented here.

Even in the case of a known stochastic distribution, a robust optimization can be carried out by the interval boundaries if required, although then the information of the distribution is neglected, which can be considered by more reasonable procedures. The expectation value can be used to specifically choose a nominal value and solve the problem with respect to that value, but again accompanied by the loss of a large part of the available information. Due to the characteristics of the expectation value the approach considered here constitutes an average-case; however, the expectation value is just one special value, and therefore possibly leads to a solution valid for one point only. An example for the difficulty of this single-value approach is given by the problem of constructing a building at the coast under the assumption of a typical (ever-present) wind load (e.g. the expectation value), but neglecting the possibility of instantaneous absence of wind, causing the building to collapse if the latter case occurs. Another possibility is to consider only some very probable cases (larger than same given probability $p_0$) and thus to disregard the improbable cases. For very small $p_0$ this is called reliability based design optimization (RBDO), while for larger $p_0$ the denomination is robust design \citep{doltsinis}. Formally, this leads to $\min_x{z(x,y)} \mbox{ s.t. } x \in \cap_{P(y) \geq p_0}{S(y)}$. The robust optimization is again a direct implication. Nonetheless one should not neglect the sometimes serious consequences of unlikely cases, even if they seem to be statistically improbable in theory. Therefore, this approach does not in general lead to safe-life, fail-safe or damage tolerance.

Another possibility for the source of uncertainties are application errors by the user. With the assumption of a range or an order of magnitude of these uncertainties this case can be treated as above and leads again to robust optimization.

First, we will look at the expected numerical behavior (condition) of topology optimization of truss \citep{przemieniecki} in view of the topology to obtain artificial limits for the robust optimization. In \eqref{lineare Topologieoptimierung Volumen: Zielfunktion}--\eqref{lineare Topologieoptimierung Volumen: Boxbedingung der Stabdicke} we will present the typical topology optimization, which will be extended to a multiple load case in \eqref{lineare Topologieoptimierung Volumen: robuste Zielfunktion}--\eqref{lineare Topologieoptimierung Volumen: robust zulaessiger Zulaessigkeitsbereich diskret approximiert} by a --- for the robust optimization --- unnecessary approximation. We then start over with \eqref{lineare Topologieoptimierung Volumen: multiple load case}--\eqref{lineare Topologieoptimierung Volumen: zulaessigkeitsbereich multiple load case} as the new initial problem to reach its robust counterpart \eqref{lineare Topologieoptimierung Volumen: robuste Zielfunktion mehrfacher Lastfall}--\eqref{lineare Topologieoptimierung Volumen: robust zulaessiger Zulaessigkeitsbereich mehrfacher Lastfall}, which will be solved without approximation. The robust counterpart \eqref{lineare Topologieoptimierung Volumen: robuste Zielfunktion mehrfacher Lastfall}--\eqref{lineare Topologieoptimierung Volumen: robust zulaessiger Zulaessigkeitsbereich mehrfacher Lastfall} is a semi-infinite optimization problem. By looking at the finite numbers of design variables, namely the cross sections of the bars, we will be able to bypass the infinite numbers of constraints and obtain a linear program \eqref{lineare Topologieoptimierung Volumen: robuste Zielfunktion mehrfacher Lastfall finite number of constraints}--\eqref{lineare Topologieoptimierung Volumen: robust zulaessiger Zulaessigkeitsbereich mehrfacher Lastfall finite number of constraints}, which provides a worst-case or robust solution.

\section{Condition of Topology Optimization of Truss}\label{section Condition of Topology Optimization of Truss}
\begin{figure}
\hspace*{\fill}
\subfigure{\label{fig:1}\thesubfigure\includegraphics[width=0.21\textwidth]{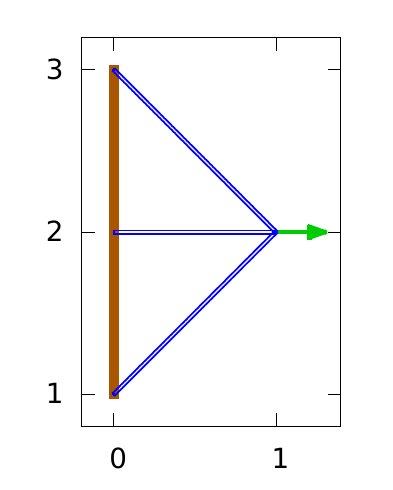}}
\hspace*{\fill}
\subfigure{\label{fig:2}\thesubfigure\includegraphics[width=0.21\textwidth]{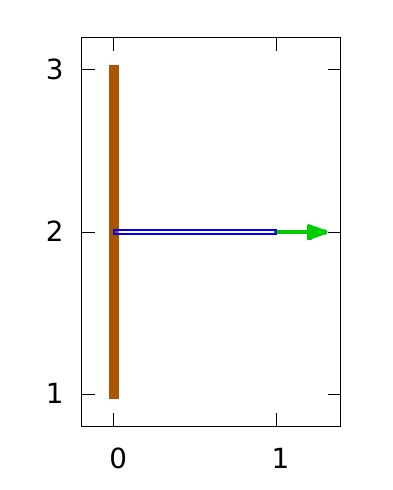}}
\hspace*{\fill}
\caption{\subref{fig:1} Ground structure with fixed nodes on the left side and arrow representing the force (example $1$). \subref{fig:2} Optimal solution.}
\label{fig:1,2}
\end{figure}

\begin{figure}
\hspace*{\fill}
\subfigure{\label{fig:3}\thesubfigure\includegraphics[width=0.21\textwidth]{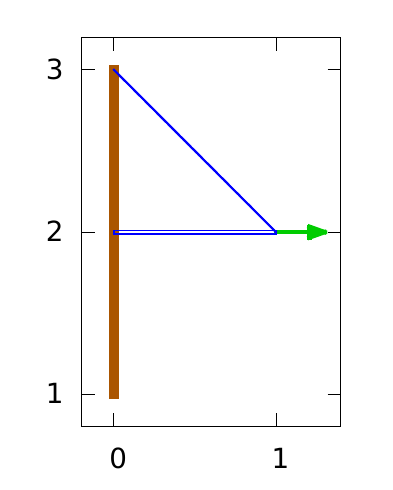}}
\hspace*{\fill}
\subfigure{\label{fig:4}\thesubfigure\includegraphics[width=0.21\textwidth]{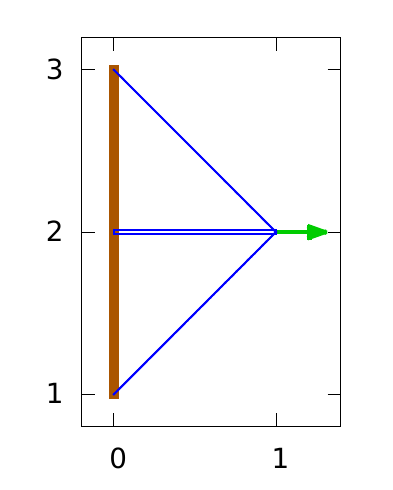}}
\hspace*{\fill}
\caption{\subref{fig:3} Optimal solution of example $1$ (c.f. Fig. \ref{fig:1}) for the case with a perturbed force. \subref{fig:4} Robust optimal solution of example $1$.}
\label{fig:3,4}
\end{figure}

The simple example of a $2$-dimensional topology optimization of a plain frame (cf. \eqref{lineare Topologieoptimierung Volumen: Zielfunktion}--\eqref{lineare Topologieoptimierung Volumen: Boxbedingung der Stabdicke}) on the ground structure of the $3$ bars $v_i = \left(e_i^{(1)},e_i^{(2)}\right)$ with the nodes $e_i^{(1)} = (0,i)^T \in \R^2$ and $e_i^{(2)} = (1,2)^T \in \R^2$ for $i=1,2,3$, which are fixed in $e_i^{(1)}$, $i=1,2,3$, demonstrates the bad condition concerning a perturbation of the force direction in the case of a horizontal force acting on the node $(1,2)^T$ with regard to the topology (Fig. \ref{fig:1}).

For the case of an unperturbed horizontal force of quantity $f\in\R$ acting in positive $x$-direction, the optimal solution, illustrated in Fig. \ref{fig:2}, yields the thickness of the bars as
\begin{align}
\tilde{s}^{(1)} = \left(\begin{array}{ccc}0&\frac{f}{\sigma_+}&0\\\end{array}\right)^T,
\end{align}
where $\sigma_-<0<\sigma_+$ represents the stress limits (e.g. lower and upper yield point), which is determined equally for all bars.
For a perturbation in positive $y$-direction, the force $\tilde{f}$ is now constructed with the components $\tilde{f}_x > \tilde{f}_y$; the magnitude of the force $\norm{\left(\begin{array}{cc}\tilde{f}_x&\tilde{f}_y\\\end{array}\right)} = f$ remains unchanged. As one optimal solution for $-\sigma_- \leq \sigma_+$ the thickness of the bars results in
\begin{align}
\tilde{s}^{(2)} = \left(\begin{array}{ccc}\frac{\sqrt{2} \tilde{f}_y}{\sigma_+}&\frac{\tilde{f}_x - \tilde{f}_y}{\sigma_+}&0\\\end{array}\right)^T,
\end{align}
which represents another topology than the unperturbed case (Fig. \ref{fig:3}). Since the optimization problem is a linear program, there are typically many solutions with the same objective function value. The stated solution $\tilde{s}^{(2)}$ is an edge of the feasible region. Obviously, another edge is $\tilde{s}^{(3)} = \left(\begin{array}{ccc}0&\frac{\tilde{f}_x - \tilde{f}_y}{\sigma_+}&\frac{\sqrt{2} \tilde{f}_y}{\sigma_+}\\\end{array}\right)^T$ and every point between these solutions is a solution, too.

Apparently, here the solution of the unperturbed case is not feasible for the perturbed case. The naming convention of the perturbed and unperturbed case is of philosophical nature. Therefore, a numerically calculated solution can in general not be accepted as feasible for the analytically posed problem --- this means bad condition. As robust optimization yields a solution which is feasible for a box including the original parameter and also the backward error caused by the backward error analysis performed for the stability of a specific algorithm, it provides a possible resource with artificially chosen limits.

\section{Robust Optimization}
The basic idea of robust optimization, as it will be used later, is outlined in the following.
For the general optimization problem $\min_x{z(x,y)}$ subject to $x \in S(y)$ restricted to the fixed parameter $y = y_0$ with the objective function $z$, which is  without loss of generality real-valued, and the parameter-dependent search space $S(y)$, a robust solution in the case of perturbations in the parameter $y$ is searched.
The perturbations constitute the set $Y$. Obviously, one has to distinguish between objective function and constraints. In the sense of the worst-case-consideration only {\em robust feasible} solutions $x \in S(Y) := \cap_{y \in Y}{S(y)}$ are considered. Clearly in general $S(Y)$ could be empty and the problem is robust unfeasible. A {\em robust optimal} solution is a robust feasible $x$ which minimizes the worst-case objective function. With that follows the robust optimal solution subject to robust feasibility as solution of $\min_{x,t}{t}$ subject to $\forall y \in Y: z(x,y) \leq t$ and $x \in S(Y)$, cf. \cite{ben_tal_nemirovski_robust_convex_optimization}. In the linear case this robust optimization agrees also with \cite{soyster_convex_programming_with_set_inclusive_constraints_and_appolications_to_inexcat_linear_programming}. If $y$ is not present in the constraints, then the robust optimization problem is equivalent to the well-known min-max problem originating from the game theory \cite[chapter III]{neumann_morgenstern}.

\section{Topology Optimization of Truss with regard to Volume (single load case)}
This general concept of robust optimization can now be applied to the topology optimization.

Here, we do not treat the solvability.
Instead we request a reasonable problem and leave the question of the solvability to the user who provided it.

The analytically stated and known (cf. \cite[matrix force method]{przemieniecki}; \cite{marti_stochastic_optimization_methods_in_optimal_engineering_design_under_stochastic_uncertainty}) linear problem of the topology optimization of a truss with respect to its volume (plastic design) is given by the objective function to be minimized
\begin{align}
\min_{s,w}{l^T s}\label{lineare Topologieoptimierung Volumen: Zielfunktion}
\end{align}
with the bar lengths $l \in \R^n$ subject to the equilibrium condition
\begin{align}
C w = f \label{lineare Topologieoptimierung Volumen: Gleichgewichtsbedingung}
\end{align}
with the reduced geometry matrix $C \in \R^{m \times n}$, the inner bar forces $w \in \R^n$  and the reduced applied external loads $f \in \R^m$. Further given are the constraints
\begin{align}
\sigma_- s \leq w \leq \sigma_+ s \label{lineare Topologieoptimierung Volumen: linear elasticity by the stress limits}
\end{align}
for the linear elasticity by --- for all bars equal --- stress limits $0 > \sigma_- \in \R$ and $0 < \sigma_+ \in \R$ (e.g. yield points for pressure and tension of the material all bars are made of), and the box constraint
\begin{align}
0 \leq s \leq s_{\max} \label{lineare Topologieoptimierung Volumen: Boxbedingung der Stabdicke}
\end{align}
for the cross sections of the bars $s \in \R^n$ with the given maximal bar cross section $s_{\max} \in \R^n$.
Here, a maximum bar cross section adjusted to the bar length allows with \eqref{lineare Topologieoptimierung Volumen: linear elasticity by the stress limits} a prevention of buckling \citep{przemieniecki}. The practical problem also includes the supports able to absorb an arbitrary load. These support positions are removed from the equilibrium condition by canceling the respective lines, and it remains the reduced form \eqref{lineare Topologieoptimierung Volumen: Gleichgewichtsbedingung} with the reduced loads and the reduced geometry matrix. Here, we restrict ourselves to this reduced forms, and consequently abandon the explicit representation of the supports.

In calculations relevant to the praxis, the stress limits $\sigma_-$ and $\sigma_+$ are often reduced by a safety factor (i.e. $0.7$). The geometry matrix $C$ describes the ground structure chosen in the modeling procedure. Hence, the only real parameter are the external loads $f$. As already shown in chapter \ref{section Condition of Topology Optimization of Truss}, the topology calculated from the optimization problem \eqref{lineare Topologieoptimierung Volumen: Zielfunktion}--\eqref{lineare Topologieoptimierung Volumen: Boxbedingung der Stabdicke} is bad conditioned with regard to the force direction. Therefore, now a substitute problem for \eqref{lineare Topologieoptimierung Volumen: Zielfunktion}--\eqref{lineare Topologieoptimierung Volumen: Boxbedingung der Stabdicke} is sought, which is robust against perturbations $f \in F \subset \R^m$.

For a robust feasible solution the constraints have to be fulfilled for all $\tilde{f} \in F$:
\begin{align}
\forall \tilde{f} \in F : \; & C w(\tilde{f}) = \tilde{f} \label{lineare Topologieoptimierung Volumen: robust zulaessiger Zulaessigkeitsbereich}\\
&\sigma_- s \leq w(\tilde{f}) \leq \sigma_+ s \\
&0 \leq s \leq s_{\max} \label{lineare Topologieoptimierung Volumen: robust zulaessiger Zulaessigkeitsbereich ende}
\end{align}

Typically, the number of elements of $F$ is infinite and therefore the number of constraints as well. Since the objective function \eqref{lineare Topologieoptimierung Volumen: Zielfunktion} is independent of $\tilde{f}$ and $w(\tilde{f}) \in \R^n$, the robust optimization problem \eqref{lineare Topologieoptimierung Volumen: Zielfunktion} subject to \eqref{lineare Topologieoptimierung Volumen: robust zulaessiger Zulaessigkeitsbereich}--\eqref{lineare Topologieoptimierung Volumen: robust zulaessiger Zulaessigkeitsbereich ende} arises.

For a discrete approximation $\tilde{F} = \left\{f_1,f_2,\ldots,f_{n_{\tilde{F}}}\right\} \subset \R^m$ of $F$ with an at this point unspecified choice of $f_{i}$, follows the structure
\begin{align}
\min_{s,w}{l^T s}\label{lineare Topologieoptimierung Volumen: robuste Zielfunktion} &&\\
&\mbox{ s.t. } && \forall f_i \in \tilde{F}: & \nonumber\\
&&& \qquad C w_i = f_i \in \R^m \\
&&& \qquad \sigma_- s \leq w_i \leq \sigma_+ s\\
&&& 0 \leq s \leq s_{\max} \label{lineare Topologieoptimierung Volumen: robust zulaessiger Zulaessigkeitsbereich diskret approximiert}
\end{align}
of a linear program with the unknowns $s \in \R^n$ and $w_i \in \R^n$, also known as multiple load case. 

In the step to \eqref{lineare Topologieoptimierung Volumen: robuste Zielfunktion}--\eqref{lineare Topologieoptimierung Volumen: robust zulaessiger Zulaessigkeitsbereich diskret approximiert} we approximated the set $F$ by $\tilde{F}$ without stating how this step is performed technically. This is in no case trivial! But manually choosing adequate discrete data from $F$ to end up at $\tilde{F}$ might be legitimate in a preliminary design, which is what the topology optimization often only provides. This manual choice is likewise not trivial. The perturbations yielding $F$ have to be considered as a small neighborhood of $f$. A reasonable approximation $\tilde{F}$ lies also in a small neighborhood of the applied $f$. However, by this naive approximation we were able to arrive at a computationally tractable problem \eqref{lineare Topologieoptimierung Volumen: robuste Zielfunktion}--\eqref{lineare Topologieoptimierung Volumen: robust zulaessiger Zulaessigkeitsbereich diskret approximiert} with the structure of the multiple load case.

Nevertheless, with the technique presented in this paper one does not have to solve the problem of the approximation of the set $F$. In the following we are even able to treat a generalization of the single load case \eqref{lineare Topologieoptimierung Volumen: Zielfunktion}--\eqref{lineare Topologieoptimierung Volumen: Boxbedingung der Stabdicke}.

\section{Topology Optimization of Truss with regard to Volume (multiple load case)}
We now want to consider the multiple load case, which is apparently a natural extension of \eqref{lineare Topologieoptimierung Volumen: Zielfunktion}--\eqref{lineare Topologieoptimierung Volumen: Boxbedingung der Stabdicke}.
Instead of just one single load $f$, we look at a given but arbitrary set of loads $\tilde{F}_{mlc} = \left\{f_1,f_2,\ldots,f_{n_f}\right\} \subset \R^m$. This leads to the optimization problem:
\begin{align}
\min_{s,w}{l^T s}\label{lineare Topologieoptimierung Volumen: multiple load case} &&\\
&\mbox{ s.t. } && \forall f_i \in \tilde{F}_{mlc}: \nonumber\\
&&& \qquad C w_i = f_i \\
&&& \qquad \sigma_- s \leq w_i \leq \sigma_+ s\\
&&& 0 \leq s \leq s_{\max} \label{lineare Topologieoptimierung Volumen: zulaessigkeitsbereich multiple load case}
\end{align}
In contrast to $\tilde{F}$, the distance between two elements of $\tilde{F}_{mlc}$ cannot be assumed as small. We can again generate a robust optimization against given perturbations $\hat{F}$. With $\tilde{F}_{mlc} \subset \hat{F} \subset \R^m$ follows as a substitution problem with a typically infinite number of elements:
\begin{align}
\min_{s}{l^T s}\label{lineare Topologieoptimierung Volumen: robuste Zielfunktion mehrfacher Lastfall} &&\\
&\mbox{ s.t. } && \forall \hat{f} \in \hat{F}: \nonumber\\
&&& \qquad C w(\hat{f}) = \hat{f} \\
&&& \qquad \sigma_- s \leq w(\hat{f}) \leq \sigma_+ s\\
&&& 0 \leq s \leq s_{\max} \label{lineare Topologieoptimierung Volumen: robust zulaessiger Zulaessigkeitsbereich mehrfacher Lastfall}
\end{align}

Assuming that $\hat{F}$ is a union of $n_f$ sets, i.e. $\hat{F} := \bigcup_{i = 1}^{n_f}{\hat{F}_i}$, we will not need to approximate $\hat{F}$, but rather we are able to specify an optimization problem which is equivalent to \eqref{lineare Topologieoptimierung Volumen: robuste Zielfunktion mehrfacher Lastfall}--\eqref{lineare Topologieoptimierung Volumen: robust zulaessiger Zulaessigkeitsbereich mehrfacher Lastfall}. Of course this procedure covers the special case of the single load case. The usual design of the limits as they are provided by the engineer leads in the following to the assumption of $\hat{F}$ as the union of parallelotopes. The following is easily reproducible also for $\hat{F}$ as the union of general sets --- but both the treatment and the solvability in a following optimization algorithm depend on the structure of these sets.

Now, we examine the multiple load case which is given by the set $\tilde{F}_{mlc} = \left\{f_1, f_2, \ldots, f_{n_f}\right\}$ of $n_f$ elements. Here, the force $f_i$ acts in the nodal points given by the set $J_i$. We will describe the corresponding perturbed forces $\hat{f}_i \in \hat{F}_i \subset \R^m$ by box constraints concerning the nodal points at which $f_i$ is acting:
\begin{align}
\hat{F}_i := & \bigg\{\hat{f}_i : \hat{f}_i = f_i + \sum_{j \in J_i}^{}{\sum_{k=1}^{d}{r(j,k) \delta_{i,j,k}}}; \bigg.\\
&\bigg. \Delta_-(i,j,k) \leq \delta_{i,j,k} \leq \Delta_+(i,j,k)\bigg\}
\end{align}

Here, $d \in \{2,3\}$ specifies the dimension of the space in which the structure lies. $r(j,k) \in \R^m$ represents a vector with the $j$th component equal $1$ for the $j$th node in the spatial direction $k$, and $0$ elsewhere. $\Delta_-(i,j,k) \in \R$ and $\Delta_+(i,j,k) \in \R$, respectively, represent the perturbation in the $j$th node for the $i$th force in the spatial direction $k$. In contrast to \cite{bentalnemirovskirobusttrusstopologydesignviasemidefiniteprogramming}, we consider the perturbations only in the nodes where the forces are acting. By choosing appropriate sets $J_i$, perturbations could also be considered in any node, but here we want to obtain a robustness with regard to the parameter force.

To simplify the description, $f_i^{(j)}$, $1 \leq j \leq n_i$, $1 \leq i \leq n_f$ are introduced as the $n_i$ edges of the parallelotope $\hat{F}_i$ for the $i$-th load case, so that the convex polytopes $\hat{F}_i$ can be written as the convex hull
\begin{align*}
\hat{F}_i = \conv{\left\{ f_i^{(j)} : 1 \leq j \leq n_i \right\}}
\end{align*}
of their edges. This allows to write every $f \in \hat{F}_i$ as the convex combination
\begin{align*}
f = \sum_{j=1}^{n_i}{\alpha_j f_i^{(j)}}, \sum_{j=1}^{n_i}{\alpha_j} = 1
\end{align*}
of the edges $f_i^{(j)}$ for some appropriately chosen $\alpha_j$.

For every load case, we define $S_i := \left\{ f_i^{(j)} : 1 \leq j \leq n_i \right\}$ as the set of the edges. Furthermore, we have
\begin{align*}
\hat{F} = \bigcup_{i = 1}^{n_f}{\hat{F}_i} = \bigcup_{i = 1}^{n_f}{\conv{S_i}}
\end{align*}
the set of all loads.

To formulate the robust semi-infinite optimization problem \eqref{lineare Topologieoptimierung Volumen: robuste Zielfunktion mehrfacher Lastfall}--\eqref{lineare Topologieoptimierung Volumen: robust zulaessiger Zulaessigkeitsbereich mehrfacher Lastfall} with a finite number of constraints, we need the following theorem:

\begin{theorem}\label{Satz M1 = M2}
Let
\begin{align*}
M_1 := \Big\{\Big. s \in \R^n : \forall \hat{f} \in \hat{F} :& \exists w(\hat{f}) : \\
&C w(\hat{f}) = \hat{f} \\
& \sigma_- s \leq w(\hat{f}) \leq \sigma_+ s \Big.\Big\}
\end{align*} and
\begin{align*}
M_2 := \Big\{\Big. s \in \R^n : \forall f_j \in \bigcup_{i = 1}^{n_f}{S_i} :& \exists w_j:\\
&C w_j = f_j \\
& \sigma_- s \leq w_j \leq \sigma_+ s \Big.\Big\}.
\end{align*}
Then
\begin{align*}
M_1 = M_2.
\end{align*}
\end{theorem}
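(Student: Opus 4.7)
The plan is to establish the two inclusions $M_1 \subseteq M_2$ and $M_2 \subseteq M_1$ separately, with the first one essentially trivial and the second one using convex combinations to interpolate bar-force vectors.

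First I would note that for every edge $f_i^{(j)} \in S_i$ we have $f_i^{(j)} \in \hat{F}_i \subseteq \hat{F}$, since each $f_i^{(j)}$ lies in $\operatorname{conv}(S_i) = \hat{F}_i$. Hence $\bigcup_{i}S_i \subseteq \hat{F}$. From this, the inclusion $M_1 \subseteq M_2$ is immediate: if $s$ is feasible for every $\hat{f} \in \hat{F}$, it is in particular feasible for every edge, and one can take $w_j := w(f_j)$ as the witness in the definition of $M_2$.

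Next I would tackle the nontrivial direction $M_2 \subseteq M_1$. Given $s \in M_2$ and an arbitrary $\hat{f} \in \hat{F}$, I would choose $i$ such that $\hat{f} \in \hat{F}_i = \operatorname{conv}(S_i)$ and write $\hat{f} = \sum_{j=1}^{n_i} \alpha_j f_i^{(j)}$ with $\alpha_j \geq 0$ and $\sum_j \alpha_j = 1$. Since $s \in M_2$, for each edge $f_i^{(j)}$ there exists a witness $w_i^{(j)}$ satisfying $Cw_i^{(j)} = f_i^{(j)}$ and $\sigma_- s \leq w_i^{(j)} \leq \sigma_+ s$. I would then define the candidate inner force distribution
\begin{align*}
w(\hat{f}) := \sum_{j=1}^{n_i} \alpha_j\, w_i^{(j)}.
\end{align*}
Linearity of $C$ yields $C w(\hat{f}) = \sum_j \alpha_j C w_i^{(j)} = \sum_j \alpha_j f_i^{(j)} = \hat{f}$, which verifies the equilibrium condition.

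For the stress constraint, I would use that each component of the box $[\sigma_- s,\sigma_+ s]$ is a (componentwise) convex set, so any convex combination of points from it remains inside. Concretely, since $\sigma_- s \leq w_i^{(j)} \leq \sigma_+ s$ for every $j$ and $\alpha_j \geq 0$, summing the inequalities weighted by $\alpha_j$ (and using $\sum_j \alpha_j = 1$) gives $\sigma_- s \leq w(\hat{f}) \leq \sigma_+ s$. Hence $s \in M_1$. The key step --- and the only step requiring more than a one-line verification --- is recognizing that the set of admissible $w$ for fixed $s$ is the preimage under $C$ of a single point intersected with a box, hence convex, so that interpolating on the finitely many edge-witnesses suffices to cover the entire convex hull $\hat{F}_i$. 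No obstacles of a technical nature arise; the only mild subtlety is ensuring that the chosen $i$ (and thus the interpolation) is unambiguous, which is harmless since we only need existence of some valid $w(\hat{f})$.
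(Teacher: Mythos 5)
Your proposal is correct and follows essentially the same route as the paper's own proof: the inclusion $M_1 \subseteq M_2$ via the observation that the edges lie in $\hat{F}$, and the inclusion $M_2 \subseteq M_1$ by taking the convex combination $w(\hat{f}) := \sum_j \alpha_j w_j$ of the edge-witnesses and verifying equilibrium by linearity of $C$ and the stress bounds by convexity of the box. No substantive difference from the paper's argument.
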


\begin{proof}
We have to show $M_1 \subseteq M_2$ and $M_2 \subseteq M_1$.

In the first case $M_1 \subseteq M_2$, let $s \in M_1$ be arbitrary. For an arbitrary $f_j \in \bigcup_{i = 1}^{n_f}{S_i}$ holds:
\begin{align*}
f_j \in \hat{F} \stackrel{s \in M_1}{\Rightarrow} : \exists w(f_j) : \sigma_- s \leq w(f_j) \leq \sigma_+ s
\end{align*}
Since $f_j$ was arbitrary, for every $f_j$ exists $w(f_j)$. This means $s \in M_2$.

To proof the second case $M_2 \subseteq M_1$, we choose an arbitrary $s \in M_2$. Let $\hat{f} \in \hat{F}$, then there exists $1 \leq i \leq n_{f}$ with $\hat{f} \in \hat{F_i} = \conv{S_i}$, and we can write $\hat{f}$ as the convex combination
\begin{align*}
\hat{f} = \sum_{j=1}^{n_i}{\alpha_j f_j^{(j)}}
\end{align*}
for appropriate $\alpha_1, \alpha_2, \ldots, \alpha_{n_i}$ with $\sum_{j=1}^{n_i}{\alpha_j} = 1$. As $s \in M_2$, there exists $w_j$:
\begin{align}
\exists w_1,\dots,w_{n_i} : & C w_j=f_i^{(j)} \label{beweis M1=M2 a} \\
&\sigma_- s \leq w_j \leq \sigma_+ s \label{beweis M1=M2 b}
\end{align}
So that for $w(\hat{f}) := \sum_{j=1}^{n_i}{\alpha_j w_j}$ holds
\begin{align}
C w(\hat{f}) & = C\sum_{j=1}^{n_i}{\alpha_j w_j} \label{beweis M1=M2 c} \\
%& = \sum_{j=1}^{n_i}{\alpha_j C w_j} \\
& = \sum_{j=1}^{n_i}{\alpha_j f_i^{(j)}} = \hat{f}
\end{align}
and
\begin{align}
w & = \sum_{j=1}^{n_i}{\alpha_j w_j} \\
& \stackrel{\eqref{beweis M1=M2 b}}{\leq} \sum_{j=1}^{n_i}{\alpha_i \sigma_+ s} = \left(\sum_{j=1}^{n_i}{\alpha_i}\right) \sigma_+ s = \sigma_+ s \\
w & = \sum_{j=1}^{n_i}{\alpha_j w_j} \\
& \stackrel{\eqref{beweis M1=M2 b}}{\geq} \sum_{j=1}^{n_i}{\alpha_i \sigma_- s} = \sigma_- s. \label{beweis M1=M2 d}
\end{align}
Since $\hat{f}$ was arbitrary, \eqref{beweis M1=M2 c}--\eqref{beweis M1=M2 d}  holds for all $\hat{f} \in \hat{F}$. This means $s \in M_1$.
\end{proof}

With $S := \bigcup_{i = 1}^{n_f}{S_i}$ we can now reformulate the semi-infinite optimization problem \eqref{lineare Topologieoptimierung Volumen: robuste Zielfunktion mehrfacher Lastfall}--\eqref{lineare Topologieoptimierung Volumen: robust zulaessiger Zulaessigkeitsbereich mehrfacher Lastfall} with a finite number of constraints:
\begin{align}
\min_{s,w}{l^T s}\label{lineare Topologieoptimierung Volumen: robuste Zielfunktion mehrfacher Lastfall finite number of constraints} &&\\
&\mbox{ s.t. } && \forall f_j \in S: \nonumber\\
&&& \qquad C w_j = f_j \\
&&& \qquad \sigma_- s \leq w_j \leq \sigma_+ s\\
&&& 0 \leq s \leq s_{\max} \label{lineare Topologieoptimierung Volumen: robust zulaessiger Zulaessigkeitsbereich mehrfacher Lastfall finite number of constraints}
\end{align}
To get a typical linear program, we consider the implicit variables $w$ as design variables. Theorem \ref{Satz M1 = M2} ensures equal polyhedrons in \eqref{lineare Topologieoptimierung Volumen: robuste Zielfunktion mehrfacher Lastfall}--\eqref{lineare Topologieoptimierung Volumen: robust zulaessiger Zulaessigkeitsbereich mehrfacher Lastfall} and \eqref{lineare Topologieoptimierung Volumen: robuste Zielfunktion mehrfacher Lastfall finite number of constraints}--\eqref{lineare Topologieoptimierung Volumen: robust zulaessiger Zulaessigkeitsbereich mehrfacher Lastfall finite number of constraints} and thus theses linear programs are equivalent.

Considering a more general $\hat{F}$ as the union of convex sets $\hat{F}_i$, the convex hull of the extreme points --- in a linear case the extreme points are the edges --- describes the same sets. Therefore, for finite numbers of extreme points of the sets the above approach can be performed in the same way.

\section{Numerical Examples}
In all examples we use as material an aluminium with a yield strength of $10^8 \, \mbox{Pa}$ and a density of $2.7 \cdot 10^3 \, \frac{\mbox{\footnotesize kg}}{\mbox{\footnotesize m}^3}$. The measurement unit for the sizes of the design spaces is meter.

All these data are only scaling factors in the linear program and do not affect the resulting topology. For example let us consider $\tilde{s}$ as a solution of:
\begin{align*}
\min_{s,w}{l^T s} && \mbox{ s.t.: } & C w = f; \sigma_- s \leq w \leq \sigma_+ s; 0 \leq s
\end{align*}
Then for $0 < \alpha, \beta \in \R$ a solution of
\begin{align*}
\min_{s,w}{l^T s} && \mbox{ s.t.: } & C w = \alpha f; \beta \sigma_- s \leq w \leq \beta \sigma_+ s; 0 \leq s
\end{align*}
is $\frac{\alpha}{\beta} \tilde{s}$ with the same topology. The equation $C w = \alpha f$ is a short hand for $\tilde{C} w = \tilde{f}$ with a scaled design space represented by the scaled geometry matrix $\tilde{C} = \alpha_1 C$, a scaled force $\tilde{f} = \alpha_2 f$ and $\alpha = \frac{\alpha_2}{\alpha_1}$, $0 < \alpha, \alpha_1, \alpha_2 \in \R$. The same holds for the inequation.

The cross section size of all bars is scaled for the visualization, but in Figs. \ref{fig:1,2}, \ref{fig:3,4}, \ref{fig:5-7}, \ref{fig:8,9}, \ref{fig:10,11} and \ref{fig:12,13} the same scaling is used. The Figs. \ref{fig:12,13pov} and \ref{fig:14-16} have the same scaling as to each other, too.

The calculation is done by the library from \cite{glpk}. We used the simplex solver as well as the interior point solver.

\paragraph{Example $1$:}
In the following first example (c.f. Figs. \ref{fig:1,2}, \ref{fig:3,4}), the maximum quantity of all loads is equal $10^4 \, \mbox{N}$. For some chosen values, the initial examples result in the numerical solutions $\tilde{s}^{(1)}$ for a strictly horizontal force (Fig. \ref{fig:2}) and $\tilde{s}^{(2)}$, shown in Fig. \ref{fig:3}, for $\tilde{f}_y = 0.1 \tilde{f}_x$:
\[ \tilde{s}^{(1)} = \left(\begin{array}{c}0.00\\1.00\cdot10^{-4}\\0.00\\\end{array}\right); \quad
\tilde{s}^{(2)} = \left(\begin{array}{c}1.41\cdot10^{-5}\\8.96\cdot10^{-5}\\0.00\\\end{array}\right) \]

For a symmetric perturbation of the horizontal force of $10 \, \mbox{\%}$ in every direction the solution of \eqref{lineare Topologieoptimierung Volumen: robuste Zielfunktion mehrfacher Lastfall finite number of constraints}--\eqref{lineare Topologieoptimierung Volumen: robust zulaessiger Zulaessigkeitsbereich mehrfacher Lastfall finite number of constraints} is the robust solution $\tilde{s}^{(3)}$, illustrated in Fig. \ref{fig:4}:
\[ \tilde{s}^{(3)} = \left(\begin{array}{c}7.81\cdot10^{-06}\\1.21\cdot10^{-04}\\7.81\cdot10^{-06}\\\end{array}\right) \]

The optimal values for the objective function are the volumes in $\mbox{m}^2$:
\begin{center}\begin{tabular}{|c|c|c|}\hline
optimal & perturbed & robust \\
$l^T \tilde{s}^{(1)}$ & $l^T \tilde{s}^{(2)}$ & $l^T \tilde{s}^{(3)}$ \\\hline
$0.000100$ & $0.000109$ & $0.000144$ \\\hline
\end{tabular}\end{center}

\paragraph{Example $2$:}
The prime example of a truss from \cite{bentalnemirovskirobusttrusstopologydesignviasemidefiniteprogramming}, there calculated by semi-definite programming with regard to robustness based on a finite element approach, is illustrated in the figures \ref{fig:5}--\ref{fig:7}, but now calculated with our robust optimization method. Here, the maximal quantities of the perturbed forces are equal to the unperturbed forces.

Figure \ref{fig:5} shows the ground structure with the fixed nodes on the left side. The arrows represent the forces of equal quantities of $10^4 \, \mbox{N}$. Figure \ref{fig:6} displays the optimal solution with a volume of $0.000800 \, \mbox{m}^2$. The robust optimal solution with a volume of $0.001309 \, \mbox{m}^2$ for a symmetric perturbation of $10 \, \mbox{\%}$ in every direction of every force is presented in figure \ref{fig:7}.

The results of the topologies are congruent to the results in \cite{bentalnemirovskirobusttrusstopologydesignviasemidefiniteprogramming}. Also, in \cite{calafiorefabrizio} the same topology is obtained by a sampling-based approximate solution of the worst-case.
\begin{figure*}
\hspace*{\fill}
\subfigure{\label{fig:5}\thesubfigure\includegraphics[width=0.3\textwidth]{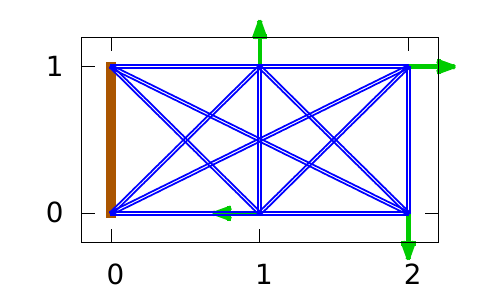}}
\hspace*{\fill}
\subfigure{\label{fig:6}\thesubfigure\includegraphics[width=0.3\textwidth]{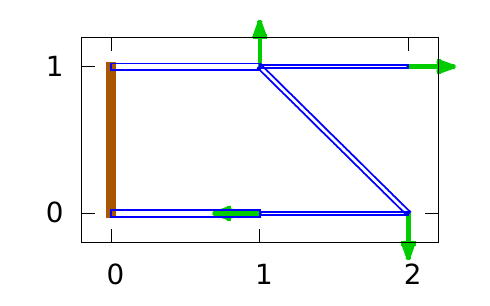}}
\hspace*{\fill}
\subfigure{\label{fig:7}\thesubfigure\includegraphics[width=0.3\textwidth]{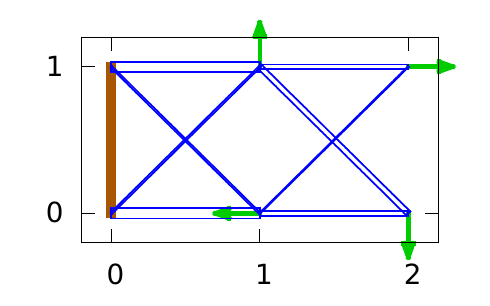}}
\caption{\subref{fig:5} Ground structure of example $2$ with fixed nodes on the left side and arrows representing the forces. \subref{fig:6} Optimal solution of the example $2$ shown in Fig. \ref{fig:5}. \subref{fig:7} Robust optimal solution of example $2$ (c.f. Fig. \ref{fig:5}).}
\label{fig:5-7}
\end{figure*}

\paragraph{Example $3$:}
We calculated a $3$-dimensional example with $27$ nodes $\left(a,b,c\right) \in \R^3$, $a,b,c \in \{1,2,3\}$ measured in meter and $274$ potential bars between every two not fixed nodes --- long bars which are located alongside several shorter bars are ignored. The supports are in the nodes $\left(1,a,b\right)$, $a,b \in \{1,2,3\}$ which are fixed in every direction. In the node $\left(3,2,1\right)$ acts a force of $4\cdot10^4 \, \mbox{N}$ in the negative $z$-direction. The optimal solution with a volume of $0.0024 \, \mbox{m}^3$ is presented in Fig. \ref{fig:8,9}. For a perturbation of $10 \, \mbox{\%}$ in every direction of the force, and scaling to obtain the same maximal quantities as in the unperturbed case, our robust optimization method produces the robust optimal solution shown in Fig. \ref{fig:10,11} with a volume of $0.0026 \, \mbox{m}^3$.

Both Fig. \ref{fig:8,9} and Fig. \ref{fig:10,11} show two solutions which are caused by linear programming. The calculation with a simplex algorithm results in edges of the polyhedron shown in Fig. \ref{fig:8} and \ref{fig:10}, whereas the results shown in Fig. \ref{fig:9} and \ref{fig:11} are calculated by an interior point algorithm which results typically not in an edge of the polyhedron. Hence, every point on the segment between the points represented by Fig. \ref{fig:8} and \ref{fig:9}, and Fig. \ref{fig:10} and \ref{fig:11}, respectively, is also a solution with the same objective function value.

\begin{figure}
\hspace*{\fill}
\subfigure{\label{fig:8}\thesubfigure\includegraphics[width=0.21\textwidth,trim=5mm 7mm 7mm 7mm,clip]{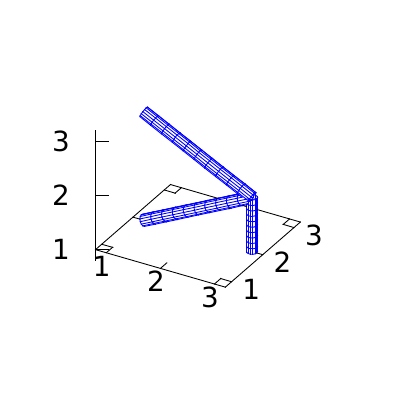}}
\hspace*{\fill}
\subfigure{\label{fig:9}\thesubfigure\includegraphics[width=0.21\textwidth,trim=5mm 7mm 7mm 7mm,clip]{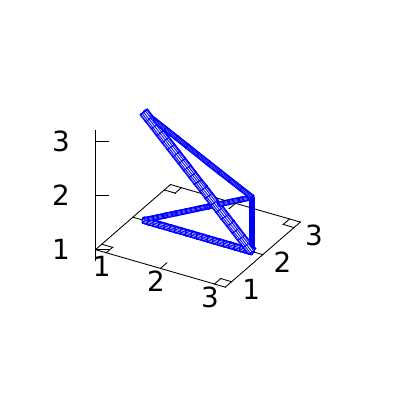}}
\hspace*{\fill}
\caption{Optimal solutions of example $3$. \subref{fig:8} Simplex algorithm. \subref{fig:9} Interior point algorithm.}
\label{fig:8,9}
\end{figure}

\begin{figure}
\hspace*{\fill}
\subfigure{\label{fig:10}\thesubfigure\includegraphics[width=0.21\textwidth,trim=5mm 7mm 7mm 7mm,clip]{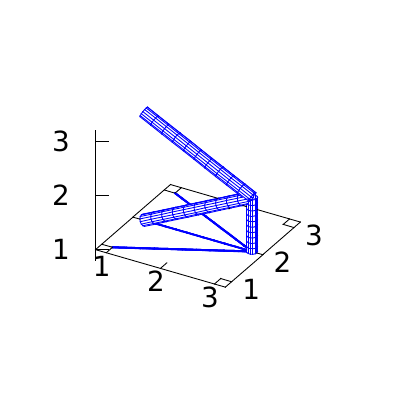}}
\hspace*{\fill}
\subfigure{\label{fig:11}\thesubfigure\includegraphics[width=0.21\textwidth,trim=5mm 7mm 7mm 7mm,clip]{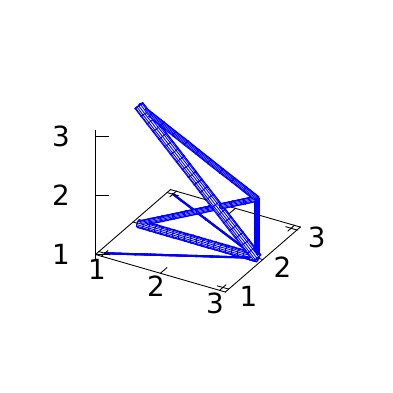}}
\hspace*{\fill}
\caption{Robust optimal solutions of example $3$. \subref{fig:10} Simplex algorithm. \subref{fig:11} Interior point algorithm.}
\label{fig:10,11}
\end{figure}

\paragraph{Example $4$:}
In this example of a mast are $72$ nodes $\left(a,b,c\right) \in \R^3$, $a,b \in \{0,\frac{1}{7},\frac{2}{7}\}$, $c \in \{\frac{x}{7}:x=0,1,\ldots,7\}$ measured in meter, and $503$ potential bars between every two not fixed nodes with a maximal distance of $\frac{\sqrt{3}}{7}$ --- long bars which are located alongside several shorter bars are ignored. The supports are in the nodes $\left(a,b,0\right)$, $a,b \in \{0,\frac{2}{7}\}$, which are fixed in every direction. In the node $\left(\frac{1}{7},\frac{1}{7},1\right)$ acts a force of $4\cdot10^4 \, \mbox{N}$ in the negative $z$-direction. The optimal solution with a volume of $0.000514 \, \mbox{m}^3$ is presented in Fig. \ref{fig:12}, \ref{fig:12pov}. For a perturbation of $50 \, \mbox{\%}$ in every direction of the force and scaling to have the same maximal quantities as in the unperturbed case, our robust optimization method produces the robust optimal solution shown in Fig. \ref{fig:13}, \ref{fig:13pov} with a volume of $0.001568 \, \mbox{m}^3$.

\begin{figure}
\hspace*{\fill}
\subfigure{\label{fig:12}\thesubfigure\includegraphics[width=0.21\textwidth,trim=5mm 7mm 7mm 4mm,clip]{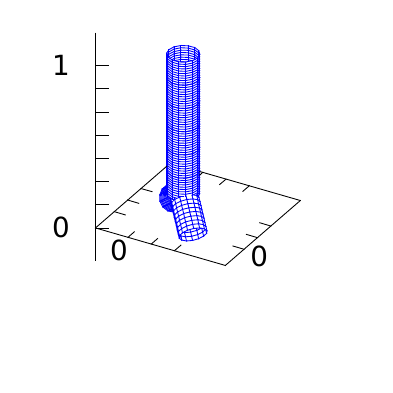}}
\hspace*{\fill}
\subfigure{\label{fig:13}\thesubfigure\includegraphics[width=0.21\textwidth,trim=5mm 7mm 7mm 4mm,clip]{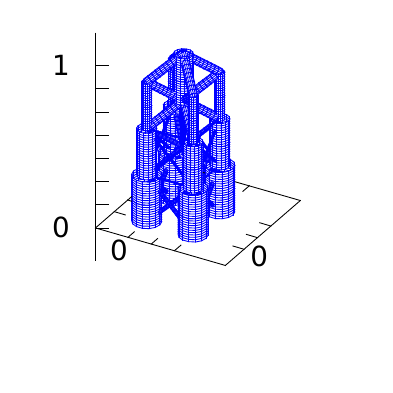}}
\hspace*{\fill}
\caption{\subref{fig:12} Optimal solution of example $4$. \subref{fig:13} Robust optimal solution of example $4$.}
\label{fig:12,13}
\end{figure}

\begin{figure}
\hspace*{\fill}
\subfigure{\label{fig:12allpov}\thesubfigure\includegraphics[width=0.3\textwidth]{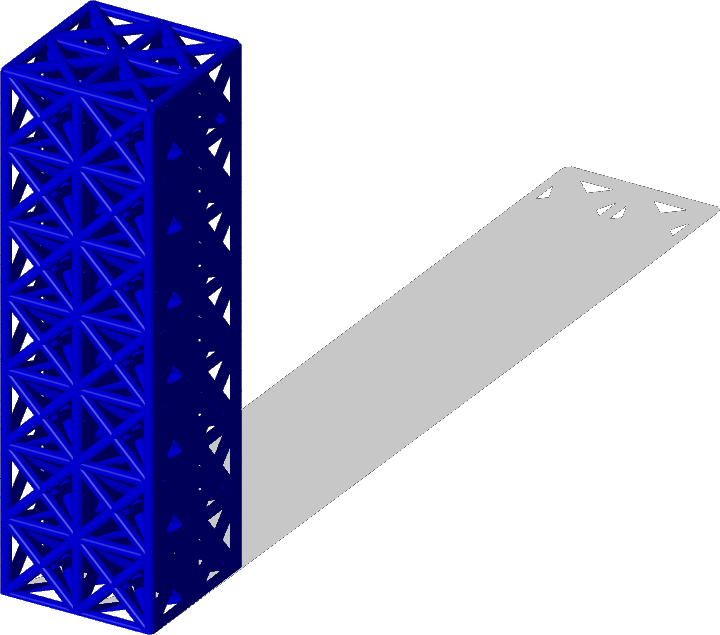}\hspace*{-3.5cm}}
\hspace*{\fill}
\subfigure{\label{fig:12pov}\thesubfigure\includegraphics[width=0.3\textwidth]{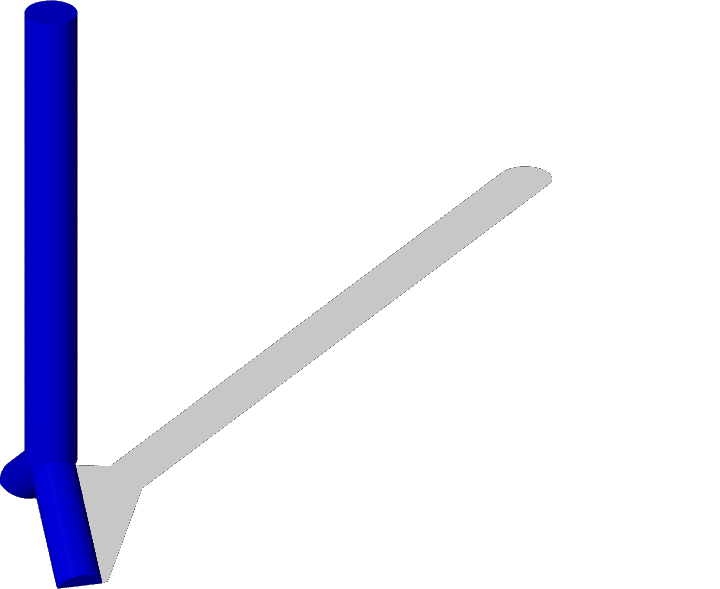}\hspace*{-4.2cm}}
\hspace*{\fill}
\subfigure{\label{fig:13pov}\thesubfigure\includegraphics[width=0.3\textwidth]{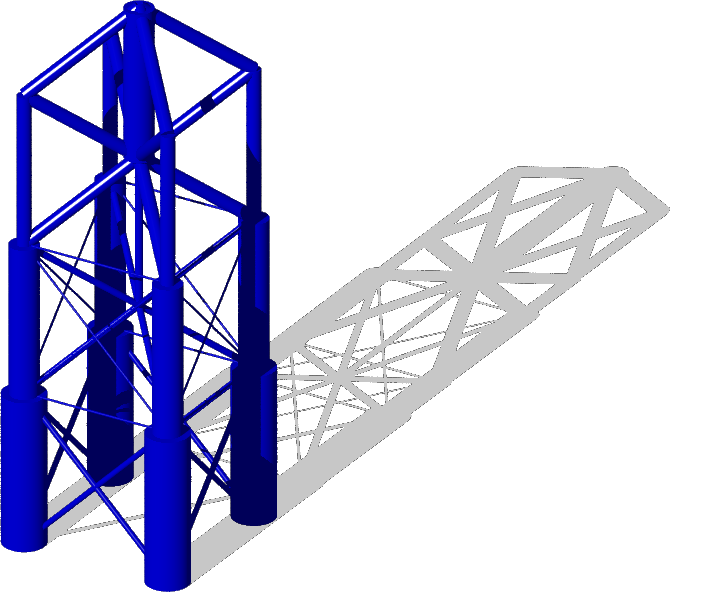}\hspace*{-2cm}}
\hspace*{\fill}
\caption{\subref{fig:12allpov} Ground structure of example $4$. \subref{fig:12pov} Optimal solution of example $4$. \subref{fig:13pov} Robust optimal solution of example $3$.}
\label{fig:12,13pov}
\end{figure}

\paragraph{Example $5$:}
As a last example, we show a multiple load case. It represents the top part of a transmission tower which supports two ground wires. The $27$ nodes $\left(a,b,c\right) \in \R^3$, $a,b,c \in \left\{0,\frac{1}{2},1\right\}$ are connected by $294$ potential bars between every two not fixed nodes --- long bars which are located alongside several shorter bars are ignored. The $4$ nodes $\left(\frac{1}{2},0,0\right),\left(\frac{1}{2},1,0\right),\left(0,\frac{1}{2},0\right),\left(1,\frac{1}{2},0\right) \in \R^3$ are fixed in every direction. We have $2$ forces $f_1$ and $f_2$ acting alone or together constituting the $3$ load cases --- represents the existing of either each single or both ground wires on top of the transmission tower. $f_1$ and $f_2$ are acting in $\left(0,0,1\right)$ and $\left(1,1,1\right)$, respectively, in the negative z-direction with a magnitude of $2\cdot10^4 \, \mbox{N}$.

The optimal solution with a volume of $0.000850 \, \mbox{m}^3$ is presented in Fig. \ref{fig:14}. For a perturbation of $50 \, \mbox{\%}$ in every direction of the forces, and scaling to obtain the same maximal quantities as in the unperturbed case, our robust optimization method produces the robust optimal solution shown in Fig. \ref{fig:15}, \ref{fig:16} with a volume of $0.002562 \, \mbox{m}^3$. Fig. \ref{fig:15} is the result of a simplex algorithm and therefore an edge of the feasible region, whereas Fig. \ref{fig:16} is calculated by an interior point algorithm.

\begin{figure*}
\hspace*{\fill}
\subfigure{\label{fig:14}\thesubfigure\hspace*{-1cm}\includegraphics[width=0.45\textwidth]{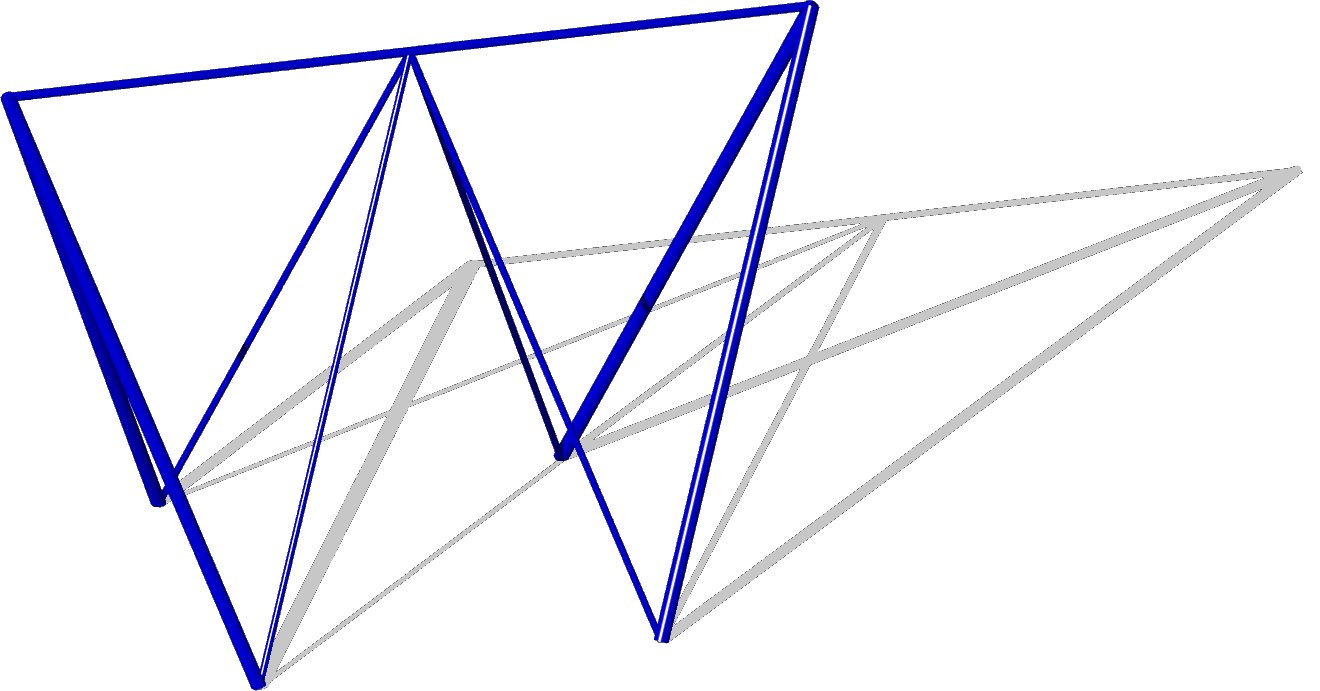}}\hspace*{-2cm}
\hspace*{\fill}
\subfigure{\label{fig:15}\thesubfigure\hspace*{-1cm}\includegraphics[width=0.45\textwidth]{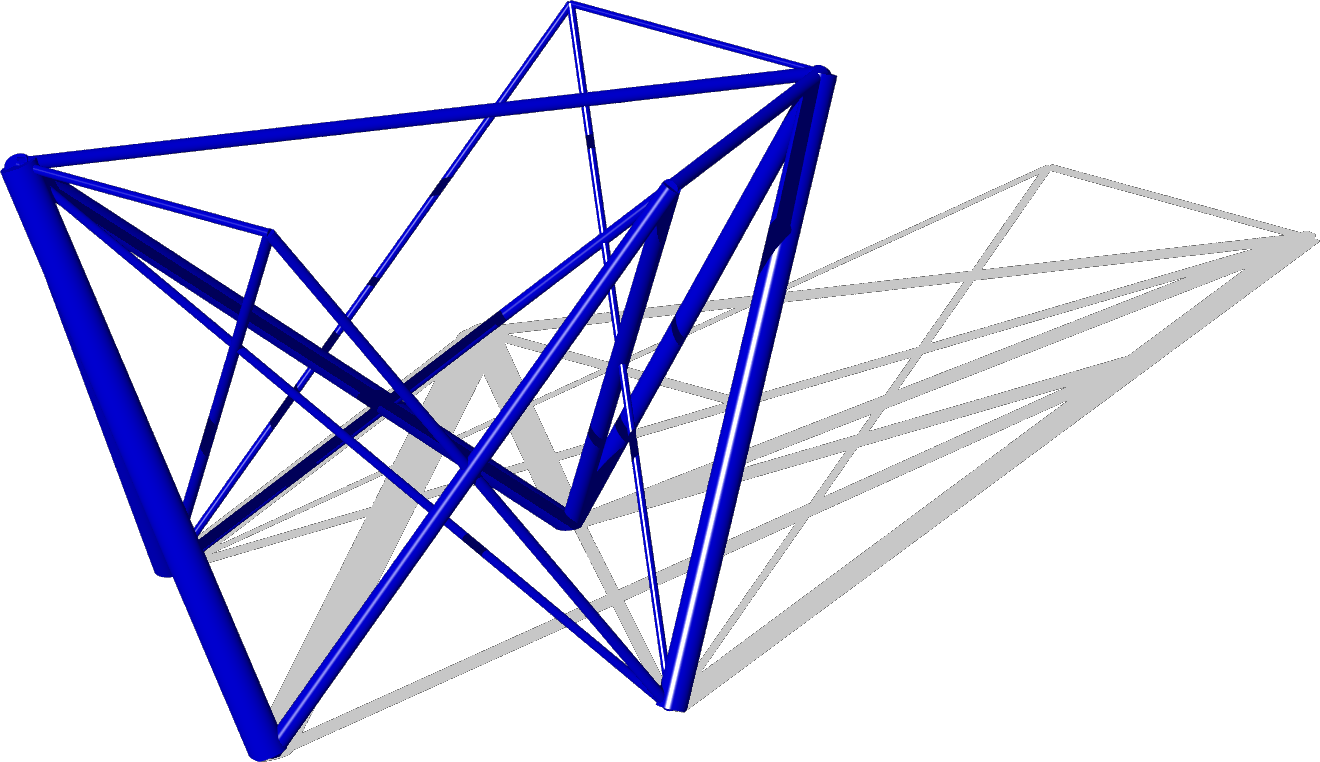}}\hspace*{-2cm}
\hspace*{\fill}
\subfigure{\label{fig:16}\thesubfigure\hspace*{-1cm}\includegraphics[width=0.45\textwidth]{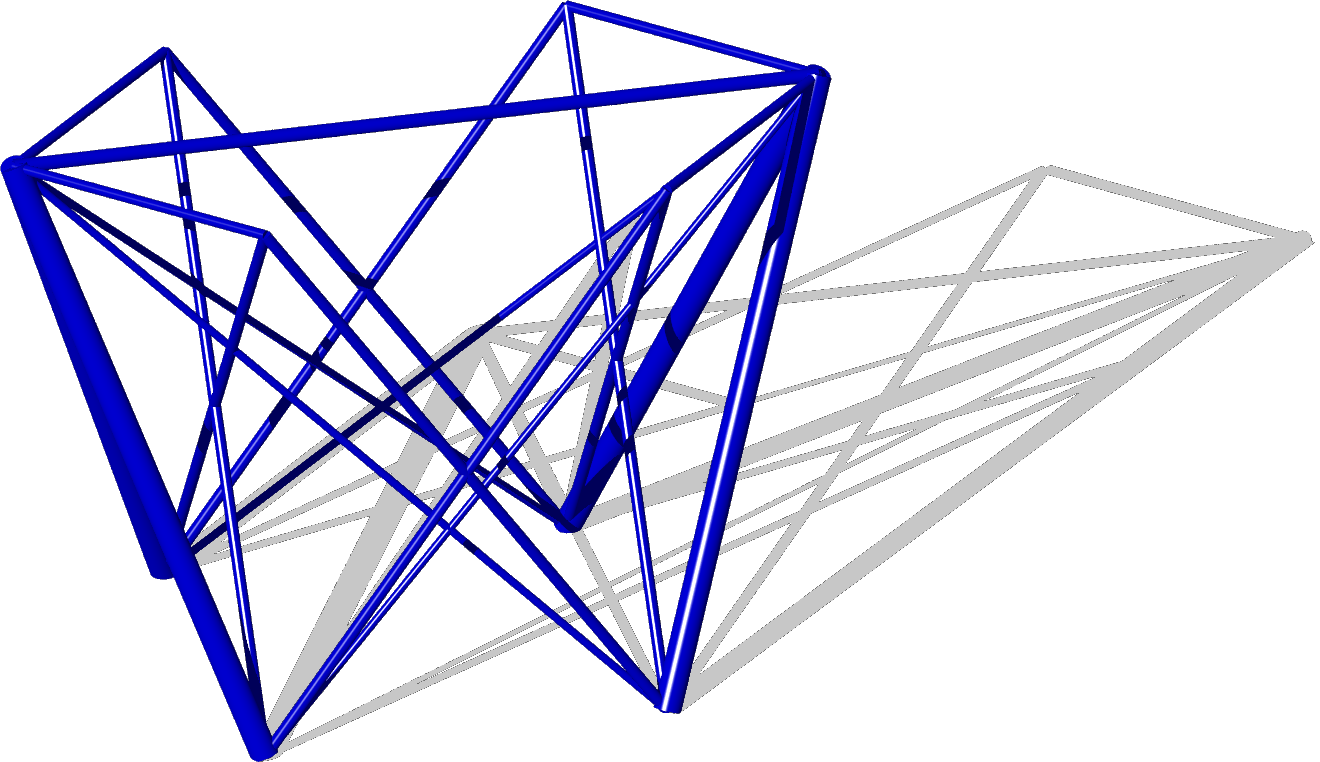}}\hspace*{-2cm}
\hspace*{\fill}
\caption{\subref{fig:14} Optimal solution of the example $5$. \subref{fig:15}, \subref{fig:16} Robust optimal solution of example $5$.}
\label{fig:14-16}
\end{figure*}

\section{Conclusion}
In the introduction we became acquainted with the origins of uncertainties. In our opinion every treatment of uncertainties leads in the end to robust optimization. The bad condition of the topology of a truss makes it necessary to choose artificial limits to definitely get a feasible solution for the analytical problem from the computer. In contrast to the common finite element approach of topology optimization, the basic initial approach chosen here is able to minimize the volume. The robust optimization yields a semi-infinite optimization problem. Typical commercial software products are able to handle semi-infinite optimization problems by adaptive generation of finite approximations of the infinite constraints. The expert knowledge of an engineer is sometimes also able to generate a wise approximation. Every time, these discrete approximations lead to the structure of the multiple load case. If we are looking for a black-box computer program to solve our problems, we need a well-defined and straightforward procedure. Our approach is able to fulfill this dream without an approximation. Finally, the numerical examples show the advantage of the robust optimization with the only drawback of a slight increase of the volume.

\bibliographystyle{plainnat}
\bibliography{literatur}

\begin{thebibliography}{14}
\providecommand{\natexlab}[1]{#1}
\providecommand{\url}[1]{\texttt{#1}}
\expandafter\ifx\csname urlstyle\endcsname\relax
  \providecommand{\doi}[1]{doi: #1}\else
  \providecommand{\doi}{doi: \begingroup \urlstyle{rm}\Url}\fi

\bibitem[Ben-Tal and
  Nemirovski(1997)]{bentalnemirovskirobusttrusstopologydesignviasemidefiniteprogramming}
Aharon Ben-Tal and Arkadi Nemirovski.
\newblock Robust truss topology design via semidefinite programming.
\newblock \emph{SIAM J. Optim.}, 7\penalty0 (4):\penalty0 991--1016, November
  1997.
\newblock ISSN 1052-6234 (print), 1095-7189 (electronic).
\newblock \doi{10.1137/S1052623495291951}.

\bibitem[Ben-Tal and
  Nemirovski(1998)]{ben_tal_nemirovski_robust_convex_optimization}
Aharon Ben-Tal and Arkadi Nemirovski.
\newblock Robust convex optimization.
\newblock \emph{Math. Oper. Res.}, 23\penalty0 (4):\penalty0 769--805, November
  1998.
\newblock \doi{10.1287/moor.23.4.769}.

\bibitem[Bends{\o}e et~al.(1994)Bends{\o}e, Ben-Tal, and
  Zowe]{bendsoebentalzowe}
M.~P. Bends{\o}e, A.~Ben-Tal, and J.~Zowe.
\newblock Optimization methods for truss geometry and topology design.
\newblock \emph{Struct. Multidisc. Optim.}, 7:\penalty0 141--159, 1994.
\newblock ISSN 1615-147X.
\newblock \doi{10.1007/BF01742459}.

\bibitem[Beyer and Sendhoff(2007)]{Beyer_Sendhoff_2007}
Hans-Georg Beyer and Bernhard Sendhoff.
\newblock Robust optimization --- a comprehensive survey.
\newblock \emph{Comput. Meth. Appl. Mech. Eng.}, 196\penalty0 (33-34):\penalty0
  3190--3218, 2007.
\newblock \doi{10.1016/j.cma.2007.03.003}.

\bibitem[Calafiore and Dabbene(2008)]{calafiorefabrizio}
Giuseppe Calafiore and Fabrizio Dabbene.
\newblock Optimization under uncertainty with applications to design of truss
  structures.
\newblock \emph{Struct. Multidisc. Optim.}, 35:\penalty0 189--200, 2008.
\newblock ISSN 1615-147X.
\newblock \doi{10.1007/s00158-007-0145-z}.

\bibitem[Doltsinis and Kang(2004)]{doltsinis}
Ioannis Doltsinis and Zhan Kang.
\newblock Robust design of structures using optimization methods.
\newblock \emph{Comput. Meth. Appl. Mech. Eng.}, 193:\penalty0 2221--2237,
  2004.
\newblock \doi{10.1016/j.cma.2003.12.055}.

\bibitem[Dorn et~al.(1964)Dorn, Gomory, and Greenberg]{dorngomorygreenberg}
W.~S. Dorn, R.~E. Gomory, and H.~J. Greenberg.
\newblock Automatic design of optimal structures.
\newblock \emph{J. Mecanique}, 3\penalty0 (1):\penalty0 25--52, 1964.

\bibitem[Makhorin(2010)]{glpk}
Andrew Makhorin.
\newblock \emph{GNU Linear Programming Kit}.
\newblock Department for Applied Informatics, Moscow Aviation, Moscow, version
  4.45 edition, December 2010.
\newblock URL \url{http://www.gnu.org/software/glpk/glpk.html}.

\bibitem[Marti(2003)]{marti_stochastic_optimization_methods_in_optimal_engineering_design_under_stochastic_uncertainty}
Kurt Marti.
\newblock Stochastic optimization methods in optimal engineering design under
  stochastic uncertainty.
\newblock \emph{ZAMM}, 83\penalty0 (12):\penalty0 795--811, 2003.
\newblock \doi{10.1002/zamm.200310072}.

\bibitem[Michell(1904)]{michell}
A.~G.~M. Michell.
\newblock The limits of economy of material in frame structures.
\newblock \emph{Phil. Mag.}, 8\penalty0 (47):\penalty0 589--597, 1904.
\newblock \doi{10.1080/14786440409463229}.

\bibitem[Neumann and Morgenstern(1944)]{neumann_morgenstern}
John~Von Neumann and Oskar Morgenstern.
\newblock \emph{Theory of Games and Economic Behavior}.
\newblock Princeton University Press, 1944.
\newblock ISBN 0691119937.
\newblock URL \url{http://jmvidal.cse.sc.edu/library/neumann44a.pdf}.

\bibitem[Przemieniecki(1968)]{przemieniecki}
Janusz~S. Przemieniecki.
\newblock \emph{Theory of matrix structural analysis}.
\newblock McGraw-Hill, Inc., New York, 1968.

\bibitem[Soyster(1973)]{soyster_convex_programming_with_set_inclusive_constraints_and_appolications_to_inexcat_linear_programming}
Allen~L. Soyster.
\newblock Convex programming with set-inclusive constraints and applications to
  inexact linear programming.
\newblock \emph{Oper. Res.}, 21\penalty0 (5):\penalty0 1154--1157, Sep. - Oct.
  1973.
\newblock \doi{10.2307/168933}.

\bibitem[Taylor(1997)]{taylor}
John~R. Taylor.
\newblock \emph{An introduction to error analysis}.
\newblock University Science Books, Sausalito California, 2nd edition, 1997.
\newblock ISBN 0-935702-42-3.

\end{thebibliography}
\end{document}